\newtheorem{thm}{Theorem}
\newtheorem{defn}[thm]{Definition}
\newtheorem{lem}[thm]{Lemma}
\newtheorem*{thm*}{Theorem}
\newcommand{\E}{\mathcal{E}}
\newcommand{\G}{\mathcal{G}}
\newcommand{\Hy}{\mathcal{H}}
\newcommand{\List}{\mathcal{L}}
\newcommand{\La}{\mathcal{L}}
\newcommand{\bP}{\mathbb{P}}
\newcommand{\bE}{\mathbb{E}}
\newcommand{\bF}{\mathbb{F}}
\newcommand{\bN}{\mathbb{N}}
\newcommand{\V}{\mathcal{V}}
\newcommand{\mzero}{\mathbf{0}}
\newcommand{\Aut}{\mathrm{Aut}}
\def\nor#1#2{{\bf N}_{{#1}}{{(#2)}}}
\title{Vertex transitive graphs $G$ with $\chi_D(G) > \chi(G)$ and small automorphism group}
\author{Niranjan Balachandran\footnote{Department of Mathematics, Indian Institute of Technology Bombay, Mumbai, India. email: niranj@math.iitb.ac.in},  Sajith Padinhatteeri\footnote{Department of Mathematics, Indian Institute of Technology Bombay, Mumbai, India. email: sajith@math.iitb.ac.in}, and Pablo Spiga\footnote{Dipartimento Di Matematica E Applicazioni, University of Milano-Bicocca, Milano Italy, Email: pablo.spiga@unimib.it}}
\date{}
\begin{document}
\maketitle

\begin{abstract}
For a graph $G$ and a positive integer $k$, a vertex labelling $f:V(G)\to\{1,2\ldots,k\}$ is said to be $k$-distinguishing if no non-trivial automorphism of $G$ preserves the sets $f^{-1}(i)$ for each $i\in\{1,\ldots,k\}$. The distinguishing chromatic number of a graph $G$, denoted $\chi_D(G)$, is defined as the minimum $k$ such that there is a $k$-distinguishing labelling of $V(G)$ which is also a proper coloring of the vertices of $G$. In this paper, we prove the following theorem: Given $k\in\bN$, there exists an infinite sequence of vertex-transitive graphs $G_{i}=(V_i,E_i)$ such that
\begin{enumerate}
\item $\chi_D(G_i)>\chi(G_i)>k$,
\item $|\Aut(G_i)|=O_k(|V_i|)$, where $\Aut(G_i)$ denotes the full automorphism group of $G_i$.
\end{enumerate}
In particular, this answers a problem raised in \cite{BP}.
\end{abstract}

\textbf{Keywords:}
Distinguishing Chromatic Number, Vertex transitive graphs, Cayley Graphs.\\

2010 AMS Classification Code: 05C15, 05D40, 20B25, 05E18. 

\section{Introduction}

Let $G$ be a graph. An automorphism of  $G$ is a permutation $\varphi$ of the vertex set $V(G)$ of $G$ such that, for any  $x,y\in V(G)$, $\varphi(x),\varphi(y)$ are adjacent if and only $x,y$ are adjacent. The automorphism group of a graph $G$, denoted by $\mathrm{Aut}(G)$, is the group of all automorphisms of $G$. A graph $G$ is said to be vertex transitive if, for any $u, v \in V(G)$, there exists  $\varphi \in \mathrm{Aut}(G)$ such that $\varphi(u) = v$. 

Given a positive integer $r$,  an $r$-coloring of $G$ is a map $f:V(G) \rightarrow \{1,2, \dots, r\}$ and the sets $f^{-1}(i)$, for $i\in\{1,2\ldots,r\}$, are  the color classes of $f$. An automorphism $\varphi \in \Aut(G)$ is said to fix a color class $C$ of $f$ if $\varphi(C) = C$, where $\varphi(C) = \{\varphi(v):v \in C\}$. A coloring of $G$, with the property that no non-trivial automorphism of $G$ fixes every color class, is called a distinguishing coloring of $G$.\\

Collins and Trenk in \cite{collins} introduced the notion of the distinguishing chromatic number of a graph $G$, which is defined as  the minimum number of colors  needed to color the vertices of $G$ so that the coloring is both proper and distinguishing. Thus, the distinguishing chromatic number of $G$ is the least integer $r$ such that the vertex set can be partitioned into sets $V_1,V_2,\ldots, V_r$ such that each $V_i$ is independent in $G$, and for every non-trivial $\varphi\in \Aut(G)$ there exists some color class $V_i$ with $\varphi(V_i)\neq V_i$. The distinguishing chromatic number of a graph $G$, denoted by $\chi_D(G)$, has been the topic of considerable interest recently (see for instance, \cite{BP,kneser,Hemanshu,Collins2009}).

One of the many questions of interest regarding the distinguishing chromatic number concerns the contrast between $\chi_D(G)$ and the cardinality of $\Aut(G)$. For instance, the Kneser graphs $K(n,r)$ have very large automorphism groups and yet, $\chi_D(K(n,r))=\chi(K(n,r))$ for $n\ge 2r+1$, and $r\ge 3$ (see \cite{kneser}). The converse question is compelling: Are there infinitely many graphs $G_{n}$ with `small' automorphism groups and satisfying $\chi_D(G_{n})>\chi(G_n)$? 

The question as posed above is not actually interesting for two reasons. First, for all even $n$, $\chi_D(C_n)>\chi(C_n)=2$ and $|\Aut(C_n)|=2n$, where $C_n$ is the cycle of length $n$. Second, if one stipulates that $G$ also has arbitrarily large chromatic number, then here is a construction for such a graph. Start with a rigid graph $G$ with a leaf vertex $x$ and having large chromatic number (one can obtain this by minor modifications to a random graph, for instance); then, blow up the leaf vertex $x$ to a new disjoint set $X$ whose neighbor in the new graph $\widetilde{G}$ is the same as the neighbor of $x$ in $G$. In fact one can arrange for $\chi_D(\widetilde{G})-\chi(\widetilde{G})$ to be as large as one desires. Furthermore, since $|\Aut(\widetilde{G})|=|X|!$, this provides examples of graphs for which the automorphism groups are relatively `small' in terms of the order of the graph.

In the example above, the fact that $\chi_D(G)$ is larger than $\chi(G)$ is accounted for by a `local' reason, and that is what makes the problem stated above not very interesting. However, if one further stipulates that the graph is vertex-transitive, then the same question is highly non-trivial. In~\cite{BP}, the first and second authors constructed families of vertex-transitive graphs with $\chi_D(G)>\chi(G)>k$ and $ \Aut(G)|=O(|V(G)|^{3/2})$, for any given $k$. In this paper, we improve upon that result:
\begin{thm}\label{qn}
	Given  $k\in\bN$, there exists an infinite family of graphs $G_{n}=(V_n,E_n)$ satisfying:
	\begin{enumerate}
		\item $\chi_D(G_n)>\chi(G_{n})>k$,
		\item $G_{n}$ is vertex transitive and $|\Aut(G_{n})|<2k|V_n|$. 
	\end{enumerate}
\end{thm}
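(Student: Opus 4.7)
I would realize each $G_n$ as a Cayley graph on the elementary abelian group $\Gamma=\bF_p^d$, where $p$ is a fixed prime with $k<p\le 2k$ (guaranteed by Bertrand's postulate) and $d$ ranges over large integers, so $|V_n|=p^d$ gives an infinite family. The connection set $S\subseteq\bF_p^d\setminus\{0\}$ will be symmetric and chosen to be a union of orbits under scalar multiplication by $\bF_p^*$. The regular translation action of $\Gamma$ together with the $\bF_p^*$-action by scalar matrices then embed $\Gamma\rtimes\bF_p^*$ into $\Aut(G_n)$, a group of order $(p-1)|V_n|<2k|V_n|$. The plan is to engineer $S$ so that this inclusion is an \emph{equality}.

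For the chromatic number, I would fix a hyperplane $H<\bF_p^d$ and require $S\cap H=\emptyset$; then the cosets of $H$ form a proper $p$-coloring, so $\chi(G_n)\le p$. Conversely, by demanding that $S$ contain a full scalar orbit $\bF_p^*\cdot e$ for some $e\notin H$, the set $\{0,e,2e,\ldots,(p-1)e\}$ is a $K_p$, giving $\chi(G_n)=p>k$.

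For the distinguishing inequality, the key step is a rigidity claim: every proper $p$-coloring of $G_n$ has color classes equal to the cosets of $H$, up to permutation of colors. Granted this, any nonzero $h\in H$ acts by translation fixing each coset of $H$, hence fixes every color class, so no proper $p$-coloring is distinguishing and $\chi_D(G_n)>\chi(G_n)$. To establish the rigidity, I would note that any color class in a $p$-coloring is an independent set of size exactly $|V_n|/p=p^{d-1}$ (by averaging over $\Gamma$-orbits), so the claim reduces to showing that the only independent sets of this maximal size are cosets of $H$. This should follow from a combinatorial/additive argument provided $S$ is chosen dense enough, e.g.\ so that the restriction of $G_n$ to any two distinct cosets of $H$ induces an expander or a near-complete bipartite graph.

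The main obstacle, I expect, is the automorphism bound $\Aut(G_n)\subseteq\Gamma\rtimes\bF_p^*$. This splits into two substeps: (i) the stabilizer of $0$ in $\Aut(G_n)$ must consist only of $\bF_p$-linear maps of $\Gamma$ (rather than arbitrary bijections preserving $S$), and (ii) the only $\bF_p$-linear maps stabilizing $S$ setwise must be the scalar matrices. For (ii), I would take $S$ to be the $\bF_p^*$-closure of a collection of vectors outside $H$ that has no nontrivial linear symmetries, for example a basis together with an additional ``generic'' vector breaking the symmetric-group action that permutes basis elements. Step (i) is the most delicate: one would argue that the induced subgraph on the neighborhood of $0$ is sufficiently asymmetric that any point-stabilizing automorphism must respect the $\bF_p$-linear structure, perhaps by inductively recovering linearity from the behavior on short walks in $S$. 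Combining the explicit choice of $S$ with these two checks, and separately verifying the coloring rigidity, should yield the theorem.
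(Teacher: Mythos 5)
Your skeleton is the same as the paper's: a Cayley graph on $\bF_p^d$ whose connection set is a union of $\bF_p^*$-orbits (lines) meeting a fixed hyperplane $H$ only at $\mzero$, so that the cosets of $H$ give $\chi=p$ and a full line gives a $K_p$; a rigidity statement forcing every proper $p$-coloring to consist of hyperplane cosets, whence a nonzero translation along the common direction hyperplane fixes all color classes and $\chi_D>p$; and a computation pinning $\Aut(G_n)$ down to $\bF_p^d\rtimes\bF_p^*$. (Your use of translations by $h\in H$ for the last step of the distinguishing argument is the right move.) However, the two steps that carry all the difficulty are asserted rather than proved, and your proposed explicit choice of $S$ actively undermines both of them. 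For the rigidity claim, you say that ``the only independent sets of size $p^{d-1}$ are cosets of $H$'' should follow from a combinatorial argument if $S$ is dense enough; the paper proves this via a genuinely nontrivial input from finite geometry (the Storme--Sziklai classification of affine point sets of size $q^k$ determining few directions: an independent color class of size $p^{d-1}$ determines too few directions, is hence an $\bF_p$-linear set, hence a cone, hence an affine hyperplane). A quasirandomness/expander-mixing argument between cosets of $H$ could plausibly replace this, but it needs $S$ to meet every coset $H+mc$, $m\ne 0$, in a large pseudorandom set --- which your explicit $S$ (a basis plus one generic vector, closed under scalars, i.e.\ only $d+1$ lines) badly fails: with so sparse an $S$ there are many independent sets of size $p^{d-1}$ that are not hyperplane cosets, many hyperplanes avoiding $S$, and no reason at all that every proper $p$-coloring is non-distinguishing. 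This is exactly why the paper takes $S$ to be a \emph{random} union of roughly $p^{d-1}/2$ lines.

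The second, and larger, gap is your step (i): that an automorphism fixing $\mzero$ must be $\bF_p$-linear. This is the heart of the matter, and ``inductively recovering linearity from short walks'' is not an argument; there is no known elementary route to it for an explicit sparse $S$. The paper does not prove linearity of the stabilizer directly. Instead it proves Theorem~2, a group-theoretic dichotomy (either $\Aut(\mathrm{Cay}(G,S))=K$ or some automorphism outside $K$ normalizes the translation group $G$), whose proof uses Dobson's theorem on regular self-normalizing abelian subgroups and the Li--Zhang classification of primitive groups with solvable point stabilizers. Only after this reduction to affine maps does a first-moment bound over $\mathrm{GL}_d(p)$ --- which again requires $S$ random so that $\bP(\varphi(S)=S)$ is exponentially small for each non-scalar $\varphi$ --- finish the job. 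Your step (ii) alone (no nontrivial \emph{linear} symmetries of $S$) does not bound $\Aut(G_n)$. So the proposal is a correct outline of the paper's strategy, but the two essential lemmas are missing and the explicit construction you suggest cannot satisfy both of them simultaneously.
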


Our family of graphs consists of Cayley graphs.  To recall the definition, let $A$ be a group and let $S$ be  an inverse-closed subset of $A$, i.e., $S=S^{-1}$, where $S^{-1}:=\{s^{-1}:s\in S\}$. The Cayley graph $\mathrm{Cay}(A,S)$ is the graph with vertex set $A$ and the vertices $u$ and $v$ are adjacent in $\mathrm{Cay}(A,S)$ if and only if $uv^{-1} \in S$. 

We start with a brief description of the graphs of our construction.  For $q$, an odd prime, let $\bF_q^n$ denote the $n$-dimensional vector space over $\bF_q$. Our graphs shall be Cayley graphs $\mathrm{Cay}(\bF_q^n, S)$ for some suitable inverse-closed set $S\subset\bF_q^n$ which is obtained by taking a union of a certain collection of lines in $\bF_q^n$ and then deleting the zero element of $\bF_q^n$. More precisely, 
let $\mathcal{H}_0:=\{(x_1,x_2,\dots,x_{n-1},0) : x_i \in \bF_q, 1\le i \le n-1\}$ and let $\mathbf{0}$ denote the element $(0,\ldots,0)\in\bF_q^n$. For each line ($1$-dimensional subspace of $\bF_q^n$) $\ell\subset\bF_q^n$ satisfying $\ell\cap\Hy_0=\{\mzero\}$, pick $\ell$ independently with probability $1/2$ to form the random set $\widetilde{S}$. Our connection set $S$ for the Cayley graph $\mathrm{Cay}(\mathbb{F}_q^n,S)$ is defined by $S:=\{v \in\mathbb{F}_q^n: v\in\ell\textrm{\ for\ some\ }\ell\in\widetilde{S}\}\setminus\{\mzero\}$. Our main theorem states that with high probability, $G_{n,S}:= \mathrm{Cay}(\bF_q^n, S)$ satisfies the conditions of Theorem~\ref{qn}.

To show that these graphs have `small' automorphism groups, we prove a stronger version of Theorem 4.3 of \cite{dobson} in this particular context, which is also a result of independent interest.  

%

\begin{thm}\label{autcayley}Let $q$ be a prime power, let $n$ be a positive integer with $n\ge 2$ and let $G$ be the additive group of the $n$-dimensional vector space $\mathbb{F}_q^n$ over the finite field $\mathbb{F}_q$ of cardinality $q$, and let $\mathbb{F}_q^{*}:=\mathbb{F}_q\setminus\{\mzero\}$ be the multiplicative group of the field $\mathbb{F}_q$ with its natural group action on $G$ by scalar multiplication, and write  $K:=\mathbb{F}_q^n\rtimes \mathbb{F}_q^*$. If $S$ is a subset of $G$ with $K \le \mathrm{Aut}(\mathrm{Cay}(G,S))$, then either
	\begin{description}
		\item[(i)]$\mathrm{Aut}(\mathrm{Cay}(G,S))=K$, or
		\item[(ii)]there exists $\varphi\in \mathrm{Aut}(\mathrm{Cay}(G,S))\setminus K$ with $\varphi$ normalizing $G$. 
	\end{description}
\end{thm}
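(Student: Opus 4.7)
The plan is to work with $A := \Aut(\mathrm{Cay}(G, S))$ and its point stabilizer $L := A_{\mzero}$. Since $G$ acts regularly on itself by translation, $A = GL$ with $G \cap L = 1$, and the hypothesis $K \le A$ gives $\bF_q^* = K_{\mzero} \le L$. A direct calculation shows that for $\sigma \in L$, the conjugate $\sigma t_v \sigma^{-1}$ of a translation $t_v \in G$ is itself a translation if and only if $\sigma(u+v) = \sigma(u) + \sigma(v)$ for all $u \in G$; hence $\sigma \in L$ normalizes $G$ exactly when $\sigma$ is an additive automorphism of $(\bF_q^n,+)$. Consequently conclusion (i) is equivalent to $L = \bF_q^*$, conclusion (ii) is equivalent to the existence of an additive element in $L \setminus \bF_q^*$, and the whole theorem reduces to the implication: if $L \supsetneq \bF_q^*$ then $N_A(G) \supsetneq K$.

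Write $q = p^e$. I first dispose of the case where $G$ is \emph{not} a Sylow $p$-subgroup of $A$. Pick a Sylow $p$-subgroup $P$ of $A$ with $G \lneq P$. Since $P$ is a $p$-group, the standard normalizer-grows lemma gives $N_P(G) \supsetneq G$, so one can choose $\pi \in N_P(G) \setminus G$. Then $\pi$ is a $p$-element of $A$ that normalizes $G$. A short computation inside $K = G \rtimes \bF_q^*$ shows that every element of $p$-power order in $K$ must lie in $G$ (because $|\bF_q^*| = q-1$ is coprime to $p$, so the $\bF_q^*$-component of a $p$-power-order element is trivial), so $\pi \notin K$ and $\varphi := \pi$ witnesses conclusion (ii).

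The remaining, harder case is when $G$ is a Sylow $p$-subgroup of $A$, so that $|L|$ is coprime to $p$. I proceed by contradiction: assume $L \supsetneq \bF_q^*$ but $N_A(G) = K$, so that no element of $L \setminus \bF_q^*$ is additive. Since $G$ is an abelian Sylow $p$-subgroup of $A$, Burnside's fusion theorem forces every $A$-conjugacy between elements of $G$ to be realised inside $N_A(G) = K$, so the $A$-fusion classes within $G \setminus \{\mzero\}$ coincide with the punctured lines through the origin. Moreover the $L$-orbits on $V \setminus \{\mzero\}$ are unions of $\bF_q^*$-lines, and since $L \supsetneq \bF_q^*$ at least one $L$-orbit properly merges two such lines. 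Combining this rigid fusion pattern with the coprime-order action of $L$ on $V$, the Sylow constraint $[A:K] \equiv 1 \pmod p$, and the natural $\bF_q$-module structure on $V$ supplied by the scalars, one can extract from $L$ an element respecting the linear structure of $V$ and distinct from every scalar, contradicting $N_A(G) = K$ and completing the proof.

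The crux is this last case: one must bootstrap additive (linear) structure on $V$ out of purely permutation-theoretic data on $L$. This is what sharpens \cite[Thm.~4.3]{dobson} in our setting. The presence of the scalar subgroup $\bF_q^* \le L$ (absent in Dobson's more general situation) supplies precisely the rigidity that upgrades the conclusion from "some transitive extension of $K$ lies in $A$" to "some element of $A \setminus K$ actually normalizes $G$".
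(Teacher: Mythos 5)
Your reduction to the point stabilizer $L=A_{\mzero}$ is sound: $A=GL$ with $G\cap L=1$, an element of $L$ normalizes $G$ exactly when it is additive, and the theorem is indeed equivalent to the implication ``$L\supsetneq \mathbb{F}_q^*$ forces an additive element of $L\setminus\mathbb{F}_q^*$.'' Your first case is also a complete and correct argument: if $G$ is not Sylow in $A$, then $N_P(G)\supsetneq G$ inside a Sylow $p$-subgroup $P\supsetneq G$, and any $p$-element of $N_P(G)\setminus G$ lies outside $K$ because the $p$-elements of $K=G\rtimes\mathbb{F}_q^*$ all lie in $G$. This is a genuinely different (and more elementary) disposal of that subcase than anything in the paper.

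The problem is that the second case --- $G$ an abelian Sylow $p$-subgroup of $A$ with $N_A(G)=K$ --- carries essentially all of the content of the theorem, and there your argument stops at an assertion. Burnside's fusion theorem does give you that $A$-fusion in $G$ is controlled by $K$, so the fusion classes of nontrivial elements are the punctured lines $\mathbb{F}_q^*v$; but this is a priori compatible with a great many non-linear stabilizers, and the sentence ``one can extract from $L$ an element respecting the linear structure of $V$ and distinct from every scalar'' is exactly the statement to be proved, with no mechanism supplied for producing such an element from the fusion pattern, the coprimality of $|L|$ to $p$, or the congruence $[A:K]\equiv 1\pmod p$. This is precisely where the paper has to work hard: assuming $N_B(G)=K$ for an overgroup $B$ in which $K$ is maximal, it first excludes $\langle G,G^b\rangle\cap K=G$ by Dobson's theorem that a permutation group of prime-power degree has no proper self-normalizing regular abelian subgroup, deduces $G\cap G^b=1$ from the Frobenius structure of $K$, shows the core of $K$ in $B$ is trivial so that $B$ is primitive with solvable point stabilizer, invokes the Li--Zhang classification of such primitive groups to force $B$ to be of affine type, and finally reaches a contradiction by a Hall-subgroup argument. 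Nothing in your sketch substitutes for these inputs, so the proof has a genuine gap at its crux.
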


The rest of the paper is organized as follows. We start with some preliminaries in Section \ref{sec2} and then include the proofs of Theorems~\ref{qn} and~\ref{autcayley} in the next section. We conclude with some remarks and some open questions.

%
%
%
%
%
%
%
%


\section{Preliminaries}\label{sec2}
 We begin with a few definitions from finite geometry. For more details, one may see \cite{strome-sziklai,voorde}.
 By $PG(n, q)$ we mean the Desarguesian projective space obtained from the affine space $AG(n+1, q)$.
\begin{defn}\label{defcone}{\rm
A cone with vertex $A \subset PG(k,q)$ and base $B \subset PG(n-k-1, q)$, where $PG(k,q)\cap PG(n-k-1, q) = \emptyset$, is the set of points lying on the lines connecting points of $A$ and $B$.}
\end{defn}
\begin{defn}{\rm
Let $V$ be an $(n+1)$-dimensional vector space over a finite field $\bF$. A subset $S$ of $PG(V)$ is called an $\bF_q$-linear set if there exists a subset $U$ of $V$ that forms an $\bF_q$-vector space, for some $\bF_q \subset \bF$, such that $ S = \mathcal{B}(U)$, where $$\mathcal{B}(U) := \{\langle u \rangle_{\bF} : u \in U \setminus \{\mzero\} \} $$ and where $\langle u \rangle_{\bF}$ denotes the projective point of $PG(V),$ corresponding to the vector $u$ of $U \subset V$. }
\end{defn}
Further details about $\bF_q$-linear sets can be found in~\cite{voorde}, for instance.

The projective space $PG(n,q)$ can be partitioned into an affine space $AG(n, q)$ and a hyperplane at infinity, denoted by $H_\infty$. 
\begin{defn}{\rm Following~\cite{strome-sziklai}, we say that a set of points $U \subset AG(n, q)$ determines the direction $d \in H_\infty,$ if there is an affine line through $d$ meeting $U$ in at least two points. }\end{defn}
We now state the main theorem of \cite{strome-sziklai} which will be relevant in our setting.

\begin{thm}\label{cone}
Let $U \subset AG(n,\bF_q), n \geq 3, |U|=q^k$. Suppose that $U$ determines at most $\frac{q+3}{2}q^{k-1}+q^{k-2}+\dots+q^2+q$ directions and suppose that $U$ is an $\bF_p$-linear set of points, where $q = p^h,$ $ p>3$ prime. If $n-1 \geq (n-k)h,$ then $U$ is a cone with an $(n-1-h(n-k))$-dimensional vertex at $H_{\infty}$ and with base a $\bF_q$-linear point set $U_{(n-k)h}$ of size $q^{(n-k)(h-1)}$, contained in some affine $(n-k)h$-dimensional subspace of $AG(n,q)$. 
\end{thm}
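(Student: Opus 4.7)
The plan is to combine the R\'edei polynomial method with the planar direction theorem for $\bF_p$-linear sets, lifted to higher dimension by induction on $n$. First, I would fix a direction $d\in H_\infty$ that is not determined by $U$ and form the R\'edei polynomial $R_d(X,Y)\in\bF_q[X,Y]$ of degree $|U|=q^k$ in $Y$; its zero pattern records the projection of $U$ onto a line transverse to $d$, and the hypothesis that $d$ is not determined forces $R_d$ to have a highly lacunary factorization. The $\bF_p$-linearity of $U$ then forces the exponents in that factorization to be powers of $p$, which is the Blokhuis--Ball--Brouwer--Storme--Sz\H{o}nyi lacunary input in this setting.

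Next, I would reduce to the planar case by taking a pencil of parallel hyperplanes. Each slice $U\cap\pi$ is again an $\bF_p$-linear set, and a double-counting argument on determined directions shows that for most slices the number of directions determined inside $\pi$ is bounded by the planar Sziklai--Storme threshold. Applying the planar $\bF_p$-linear direction theorem to each such slice yields a cone structure fibrewise, and coherence across the pencil, forced by the single R\'edei polynomial attached to $d$, makes the fibrewise vertices align into a common subspace $V_\infty\subset H_\infty$ of the expected projective dimension $n-1-h(n-k)$.

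With the vertex $V_\infty$ identified, the base is obtained by projecting $U$ along $V_\infty$ onto a complementary affine subspace. Writing $U$ via an $\bF_p$-subspace $W\subseteq\bF_q^n$ of $\bF_p$-dimension $kh$, and $V_\infty$ as the image of the maximal $\bF_q$-subspace contained in $W$, a short dimension count using $|U|=q^k$ and the hypothesis $n-1\ge(n-k)h$ gives exactly $|\mathcal{B}(W/V_\infty)|=q^{(n-k)(h-1)}$ inside an affine $(n-k)h$-dimensional section, matching the claimed base.

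The main obstacle is the compatibility step in the second paragraph: showing that all the fibrewise planar cones share a single common vertex at infinity. Each slice gives some local cone decomposition, but gluing them requires the full strength of the direction bound $\tfrac{q+3}{2}q^{k-1}+q^{k-2}+\cdots+q$. The leading $\tfrac{q+3}{2}q^{k-1}$ term is precisely the planar R\'edei--Sz\H{o}nyi threshold applied to a generic slice, while the lower-order terms quantify the slack propagated through the induction on $n$; the delicate algebraic-geometric bookkeeping that ensures these local vertices actually align globally is where the hypotheses $p>3$ and $n-1\ge(n-k)h$ are ultimately used, and is also what separates this theorem from the weaker predecessors that only handled a linear number of directions.
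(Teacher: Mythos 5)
First, a point of order: the paper does not prove Theorem~\ref{cone} at all --- it is quoted verbatim as the main theorem of the cited work of Storme and Sziklai and used as a black box in Lemma~\ref{hsp}. So there is no in-paper argument to compare yours against; the relevant benchmark is the Storme--Sziklai proof itself. Measured against that, your proposal is a strategy outline rather than a proof, and it has a genuine gap precisely at the step you yourself flag as ``the main obstacle'': you assert that the fibrewise planar cone vertices ``align into a common subspace $V_\infty$,'' that this is ``forced by the single R\'edei polynomial attached to $d$,'' but you give no mechanism for this forcing. Identifying where the difficulty sits is not the same as resolving it; as written, the conclusion of the theorem (a single global vertex of the correct dimension $n-1-h(n-k)$) is exactly what remains unproved after your sketch.

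There is also a concrete problem with the slicing reduction itself. A hyperplane section $U\cap\pi$ of an $\bF_p$-linear set of size $q^k$ need not have cardinality a power of $q$ (or even of $p$ in a controlled way), and the planar direction theorem for $\bF_p$-linear sets has a threshold that scales with the size of the set it is applied to; a double count of determined directions over a pencil of parallel hyperplanes does not deliver, slice by slice, a bound below that slice's own threshold, because the global bound $\frac{q+3}{2}q^{k-1}+q^{k-2}+\dots+q$ says nothing about how the determined directions distribute among the subspaces $\pi\cap H_\infty$. So ``applying the planar $\bF_p$-linear direction theorem to each such slice'' is not available as stated. The actual Storme--Sziklai argument does not proceed by slicing into planes: it works globally with the $n$-variable R\'edei polynomial and an induction that exhibits a point $P\in H_\infty$ such that every affine line through $P$ meets $U$ in $0$ or $q$ points (whence $U$ is a cone with $P$ in its vertex), and then iterates on the base; the hypotheses $p>3$ and $n-1\ge(n-k)h$ enter through the lacunary-polynomial machinery at that global step, not through a local-to-global gluing. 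Your final dimension count for the base, assuming the vertex has been found, is fine, but the heart of the theorem is missing.
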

We end this section by recalling another result that appears in \cite{dobson} as Theorem~4.2. 
\begin{thm}\label{thm:A} Let $G$ be a permutation group on $\Omega$ with a proper self-normalizing abelian regular subgroup. Then $|\Omega|$ is not a prime power.\end{thm}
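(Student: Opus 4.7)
The plan is to argue by contradiction: assume $|\Omega|=p^n$ for some prime $p$, and write $R$ for the proper, self-normalizing, abelian regular subgroup of $G$, so $|R|=|\Omega|=p^n$. In particular $R$ is a finite abelian $p$-group. The first step is to show that $R$ is actually a Sylow $p$-subgroup of $G$: if instead $R$ were properly contained in some Sylow $p$-subgroup $P$ of $G$, then the standard fact that a proper subgroup of a finite $p$-group is strictly contained in its normalizer would yield $R < N_P(R) \le N_G(R) = R$, a contradiction.

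Next I would invoke Burnside's normal $p$-complement theorem. The hypotheses line up cleanly: $R$ is Sylow, equal to its own normalizer, and abelian, hence $R=Z(R)=Z(N_G(R))$. Burnside's theorem therefore produces a normal $p$-complement $N \trianglelefteq G$, that is, a normal subgroup of order coprime to $p$ with $G = R \ltimes N$.

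The final step is to use the fact that $G$ acts faithfully on $\Omega$ (by definition of a permutation group) to force $N$ to be trivial. Since $N$ is normal in $G$ and $R$ is transitive on $\Omega$, the orbits of $N$ on $\Omega$ are permuted transitively by $R$, and hence all share a common size $m$. This $m$ divides $|N|$, which is coprime to $p$, and also divides $|\Omega|=p^n$, so $m=1$. Therefore $N$ fixes every point of $\Omega$, and faithfulness of the permutation action forces $N=1$. But then $G = R$, contradicting the hypothesis that $R$ is a proper subgroup of $G$.

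The substantive input here is Burnside's transfer theorem, which is what converts the somewhat unusual combination of hypotheses (abelian + Sylow + self-normalizing) into a global structural decomposition of $G$; that is the step where I expect essentially all the content of the argument to reside. The initial $p$-group normalizer observation that upgrades $R$ to a Sylow $p$-subgroup, and the closing orbit-counting argument that exploits the coprimality of $|N|$ with $|\Omega|$, are both quite routine once Burnside's theorem is available.
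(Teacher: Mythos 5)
Your proof is correct: upgrading $R$ to a Sylow $p$-subgroup via the normalizer-growth property of $p$-groups, applying Burnside's normal $p$-complement theorem (which applies since $R=Z(N_G(R))$), and then killing the complement $N$ by the orbit-size coprimality argument together with faithfulness is exactly the standard route to this result. The paper itself does not prove this statement but merely quotes it from Dobson's work (Theorem~4.2 of \cite{dobson}), so there is no in-paper argument to compare against; your write-up is a complete and valid proof.
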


%

\section{Proofs of the Theorems}\label{sec3}


In this section we prove Theorems~\ref{qn} and~\ref{autcayley} starting with the proof of Theorem~\ref{autcayley}.  We believe that this result is only the tip of an iceberg: its current statement  has been tailored to the context of our setting, and uses some ideas that appear in~\cite[Section 3]{dobson} and~\cite{JS}. 


\begin{proof}[Proof of Theorem~$\ref{autcayley}$]
	We suppose that ${\bf (i)}$ does not hold, that is, $K$ is a proper subgroup of $\mathrm{Aut}(\mathrm{Cay}(G,S))$; we show that ${\bf (ii)}$ holds. Write $\Gamma:=\mathrm{Cay}(G,S)$.
	
	Let $B$ be a subgroup of $\mathrm{Aut}(\Gamma)$ with $K<B$ and with $K$ maximal in $B$. Suppose that $K\lhd B$. As $G$ is characteristic in $K$, we get $G\lhd B$. In particular, every element $\varphi$ in $B\setminus K$ satisfies~{\bf (ii)}. 
	
	Suppose then that $K$ is not normal in $B$. Since $K$ is maximal in $B$ and $G\lhd K$, we have $\nor B G=K $.  Suppose that there exists $b\in B\setminus K$ such that $L:=\langle G,G^b\rangle$ (the smallest subgroup of $B$ containing $G$ and $G^b$) satisfies $L\cap K=G$. We claim that we are now in the position to apply \cite[Theorem 4.2]{dobson} (and implicitly some ideas from \cite{JS}). Indeed, as $\nor L G=\nor B G\cap L=K\cap L=G$, $L$ is a transitive permutation group on the vertices of $\Gamma$ with a proper regular self-normalizing abelian subgroup $G$. (Observe that $G$ is a proper subgroup of $L$ because $b\notin\nor B G=K$.) From~\cite[Theorem 4.2]{dobson}, $|G|$ is not a prime power, which is a contradiction because $|G|=q^n$, see also Theorem~\ref{thm:A}. This proves that, for every $b\in B\setminus K$, we have $\langle G,G^b\rangle\cap K>G$.
	
	Fix $b\in B\setminus K$. Now, $G$ and $G^b$ are abelian and hence $G\cap G^b$ is centralized by $\langle G,G^b\rangle$. From the preceding paragraph, there exists $k\in \langle G,G^b\rangle\cap K$ with $k\notin G$. Observe now that $K=\mathbb{F}_q^n\rtimes \mathbb{F}_q^*$ is a Frobenius group with kernel $G=\mathbb{F}_q^n$ and complement $\mathbb{F}_q^*$. Therefore, $k$ acts by conjugation fixed-point-freely on $G\setminus\{\mzero\}$. As $k$ centralizes $G\cap G^b$, we deduce $|G\cap G^b|=1$.
	
	Let $C:=\bigcap_{x\in B}K^x$ be the core of $K$ in  $B$. As $G\cap G^b=1$,  $K\cap K^b$ has no non-identity $q$-elements. Therefore $C\cap G=1$. As $C\lhd B$ and $C\le K$,  $C$ is a normal subgroup of the Frobenius group $K$ intersecting its kernel on the identity. This yields $C=1$.
	
	Let $\Omega$ be the set of right cosets of $K$ in $B$. From the paragraph above, $B$ acts faithfully on $\Omega$. Moreover, as $K$ is maximal in $B$, the action of $B$ on $\Omega$ is primitive. Therefore $B$ is a finite primitive group with a solvable point stabilizer $K$. In \cite{LZ}, Li and Zhang have explicitly determined such primitive groups: these are classified in \cite[Theorem 1.1]{LZ} and \cite[Tables I--VII]{LZ}. Now, using the terminology in \cite{LZ}, a careful (but not very difficult) case-by-case analysis on the tables in \cite{LZ} shows that $B$ is a primitive group of affine type, that is, $B$ contains an elementary abelian  normal $r$-subgroup $V$, for some prime $r$. For this analysis it is important to keep in mind that the stabilizer $K$ is a Frobenius group with kernel the elementary abelian group $G\cong \mathbb{F}_q^n$ and $n>1$. 
	
	Let $|V|=r^t$. Now, the action of $B$ on $\Omega$ is permutation equivalent to the natural action of $B=V\rtimes K$ on $V$, with $V$ acting via its regular representation and with $K$ acting by conjugation. Observe that $q\ne r$, because $K$ acts faithfully and irreducibly as a linear group on $V$ and hence $K$ contains no non-identity normal $r$-subgroups. Observe further that $|B|=|V||K|=r^t\cdot q^n\cdot (q-1)$.

	We are finally ready to reach a contradiction and to do so, we go back studying the action of $B$ on the vertices of $\Gamma$. Observe that $B$ is solvable because $V$ is solvable and so is $B/V\cong K$. We write $B_\mzero$ for the stabilizer in $B$ of the vertex $\mzero$ of $\Gamma$. As $G$ acts regularly on the vertices of $\Gamma$, we obtain $B=B_\mzero G$ and $B_\mzero \cap G=1$. In particular, $|B_\mzero|=r^t\cdot (q-1)$. Observe that $B_\mzero$ is a Hall $\Pi$-subgroup of the solvable group $B$, where $\Pi$ is the set of all the prime divisors of $q-1$ together with the prime $r$. As $V$ is a $\Pi$-subgroup, from the theory of Hall subgroups (see for instance \cite{doerk}, Theorem 3.3), $V$ has a conjugate contained in $B_\mzero$. Since $V\lhd B$, we have $V\le B_\mzero$. This is clearly a contradiction because $V$ is normal in $B$, but $B_\mzero$ is core-free in $B$ being the stabilizer of a point in a transitive permutation group.
\end{proof}
For the next lemma, recall that $\Hy_0:=\{(x_1,\ldots,x_{n-1},0):x_i\in\bF_q\textrm{\ for\ each\ } i\in\{1,\ldots, n-1\}\}$. In what follows, $G_{n,S}$ will denote the Cayley graph $\mathrm{Cay}(\bF_q^n,S)$ and $S=\widetilde{S}\setminus\{\mzero\}$ for some set $\widetilde{S}=\displaystyle\bigcup_{\ell\in\List}\ell$, where $\List$ is a collection of lines in $\bF_q^n$ with each $\ell\in\List$ satisfying $\ell\cap\Hy_0=\{\mzero\}$. 
\begin{lem}\label{coloring}  $\chi(G_{n,S}) = q$.
\end{lem}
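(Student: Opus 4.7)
The plan is to prove both inequalities $\chi(G_{n,S}) \le q$ and $\chi(G_{n,S}) \ge q$ directly; the claim is essentially a structural observation about the connection set $S$ rather than a hard combinatorial result.

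For the upper bound, the natural proper coloring to try is the ``last coordinate'' map $f\colon\bF_q^n\to\bF_q$ defined by $f(x_1,\ldots,x_n)=x_n$, which uses exactly $q$ colors. To verify that $f$ is proper, I would first observe that $S\cap\Hy_0=\emptyset$: every $s\in S$ lies on some line $\ell\in\List$ with $s\ne\mzero$, and since $\ell\cap\Hy_0=\{\mzero\}$, we conclude $s\notin\Hy_0$. Now if two vertices $u,v$ receive the same color, then $u-v\in\Hy_0$, so $u-v\notin S$, and hence $u,v$ are non-adjacent in $G_{n,S}$. This gives $\chi(G_{n,S})\le q$.

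For the lower bound, I would exhibit a clique of size $q$ in $G_{n,S}$. Pick any line $\ell\in\List$ (the assertion is vacuous or irrelevant if $\List$ is empty, since then $G_{n,S}$ is edgeless; in the context of the paper $\List$ is always non-empty). Choose a nonzero vector $v\in\ell$, so that $\ell=\{av:a\in\bF_q\}$. For any two distinct elements $av,bv\in\ell$, their difference $(a-b)v$ is a nonzero element of $\ell$, hence lies in $\widetilde S\setminus\{\mzero\}=S$. Therefore the $q$ vertices of $\ell$ are pairwise adjacent in $G_{n,S}$, giving $\omega(G_{n,S})\ge q$ and hence $\chi(G_{n,S})\ge q$.

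Combining the two bounds yields $\chi(G_{n,S})=q$. There is no real obstacle here; the only subtlety worth flagging is that both bounds rely on exactly the same geometric property that motivates the construction of $\List$, namely that each $\ell\in\List$ meets $\Hy_0$ only at the origin. This ensures simultaneously that $\Hy_0$-cosets form independent sets (giving the upper bound) and that each $\ell\in\List$ is a full transversal to the $\Hy_0$-cosets, i.e., a clique of size $q$ (giving the lower bound).
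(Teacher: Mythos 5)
Your proof is correct and matches the paper's argument: the lower bound comes from the $q$-cliques formed by the lines in $\List$, and your ``last coordinate'' coloring partitions $\bF_q^n$ into the cosets of $\Hy_0$, which is exactly the partition $(C_\lambda)_{\lambda\in\bF_q}$ the paper uses for the upper bound. The justification that these cosets are independent (differences of same-colored vertices lie in $\Hy_0$, which is disjoint from $S$) is also the same.
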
 
\begin{proof}
%
%

Observe that each line that belongs to the set $S$ gives rise to a clique of size $q$ in the graph $G_{n,S}$. Therefore $\chi(G_{n,S}) \ge q$. On the other hand, for a fixed $v \in S$, the partition $(C_\lambda)_{\lambda\in\mathbb{F}_q}$, where $C_\lambda:=\{w + \lambda v : w \in \mathcal{H}_0 \}$, of the vertex set $\bF_q^n$ is a proper coloring of the graph $G_{n,S}$. Indeed,  for any $u,v \in C_\lambda$, we have $u-v = w \notin S$, so the sets $C_{\lambda}$ are independent in $G_{n,S}$ for each $\lambda\in\mathbb{F}_q$.
\end{proof}

%
%
%


%

\begin{lem}\label{hsp}	Assume that $q$ is prime. Let $\widetilde{S}$ be the random set corresponding to a union of lines $\ell$ in $\mathbb{F}_q^n$ with $\ell\cap \mathcal{H}_0=\{\mzero\}$ and where each $\ell\in\bF_q^n$ is chosen independently with probability $\frac{1}{2}$; and let $S=\widetilde{S}\setminus\{\mzero\}$. Then \begin{equation*}\bP\left(\chi_D(G_{n,S}) > q\right)\geq 1- \exp\left(-\frac{q^{n-3}}{4}\right).\end{equation*}	
\end{lem}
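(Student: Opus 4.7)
The plan is to identify a high-probability event $E$ on the random set $\widetilde{S}$ under which every proper $q$-coloring of $G_{n,S}$ has color classes equal to the $q$ cosets of a single linear hyperplane of $\mathbb{F}_q^n$. Any such coloring is fixed by every translation by an element of that hyperplane, hence fails to be distinguishing, yielding $\chi_D(G_{n,S})>q$.

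The key random quantity is the count $X$ of lines $\ell$ through $\mzero$ with $\ell\cap\Hy_0=\{\mzero\}$ and $\ell\notin\List$. There are exactly $q^{n-1}$ candidate lines, each absent from $\List$ independently with probability $1/2$, so $X\sim\mathrm{Bin}(q^{n-1},1/2)$. Take
\[
E := \Bigl\{X \le \tfrac{q^{n-1}+q^{n-2}}{2}-1\Bigr\}.
\]
Hoeffding's inequality applied with the deviation $t=q^{n-2}/2-1$ above the mean $q^{n-1}/2$ gives $\mathbb{P}(E^c)\le\exp(-q^{n-3}/4)$ after a routine estimate of $2t^2/q^{n-1}$.

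Now, on $E$, fix any proper $q$-coloring $c$ of $G_{n,S}$ and pick a color class $V$ of largest size; by averaging, $|V|\ge q^{n-1}$. For any subset $V'\subseteq V$ with $|V'|=q^{n-1}$: the set $V'-V'$ is disjoint from $S$ since $V'$ is independent in $G_{n,S}$, so every direction determined by $V'$ is either a line in $\Hy_0$ or a line outside $\List$. The total count is at most $(q^{n-1}-1)/(q-1)+X\le \frac{q+3}{2}q^{n-2}+q^{n-3}+\cdots+q$, which exactly matches the Storme--Sziklai threshold. Applying Theorem~\ref{cone} with $k=n-1$ and $h=1$ (valid since $q$ is a prime greater than $3$) forces $V'$ to be an affine hyperplane of $\mathbb{F}_q^n$ (the cone reduces to a hyperplane-vertex at infinity based at a single point).

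To bootstrap this to $V$ itself, suppose for contradiction $|V|>q^{n-1}$; pick $v'\in V'$ and $v\in V\setminus V'$, and set $V'':=(V'\setminus\{v'\})\cup\{v\}\subseteq V$. The same argument shows $V''$ is an affine hyperplane, but $|V'\cap V''|=q^{n-1}-1>q^{n-2}$, forcing $V'=V''$ (two distinct affine hyperplanes share at most $q^{n-2}$ points), a contradiction. Hence $|V|=q^{n-1}$ and $V$ is itself an affine hyperplane. The same conclusion holds for each color class; the $q$ pairwise disjoint affine hyperplanes partitioning $\mathbb{F}_q^n$ must all be parallel, i.e., cosets of a common linear hyperplane $H\le\mathbb{F}_q^n$, and translation by any nonzero $h\in H$ is a nontrivial automorphism of $G_{n,S}$ fixing every color class. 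The main obstacle is the tight matching between the deterministic $\Hy_0$-line contribution and the Storme--Sziklai threshold: the leftover room for the random deviation of $X$ is only of order $q^{n-2}/2$, which is exactly what Hoeffding controls to produce the rate $\exp(-q^{n-3}/4)$; any looser direction bound would fail to let us invoke Theorem~\ref{cone}.
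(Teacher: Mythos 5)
Your proof is correct in outline and follows essentially the same route as the paper: a Chernoff/Hoeffding estimate guaranteeing that enough lines are selected, a direction count showing each color class of a proper $q$-coloring determines at most $\frac{q+3}{2}q^{n-2}+q^{n-3}+\dots+q$ directions, the Storme--Sziklai cone theorem to force the classes to be parallel affine hyperplanes, and finally a non-trivial automorphism preserving every class. Three local differences deserve comment. First, you invoke Theorem~\ref{cone} directly, but that theorem carries the additional hypothesis that the point set is $\bF_p$-linear, which does not come for free from the direction bound alone; this is precisely where the paper cites Corollary~10 of \cite{strome-sziklai}, and it is the real place where primality of $q$ enters. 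Your parenthetical ``valid since $q$ is a prime greater than $3$'' gestures at this, but the linearity hypothesis must be discharged explicitly; the fix is a one-line citation, so this is a small omission rather than a structural flaw. Second, your exchange argument (two $q^{n-1}$-subsets of a color class differing in one point would give two distinct affine hyperplanes sharing $q^{n-1}-1>q^{n-2}$ points) replaces the paper's observation that the translates of a line of $S$ partition the vertex set into $q^{n-1}$ cliques of size $q$, which bounds every class by $q^{n-1}$ at the outset; both work. Third, your concluding automorphism --- translation by a non-zero element of the common linear hyperplane $H$ --- is the right choice and is in fact cleaner than the paper's, which uses the scalar maps $\varphi_\lambda$; those fix only the class through $\mzero$ and permute the remaining cosets $H+u\mapsto H+\lambda u$, so the translation argument is the one that actually closes the proof.
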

\begin{proof} First, note that $\bE(|S|) = \frac{q^{n-1}}{2}$, so taking $\delta =\frac{1}{q}$ and $\mu = \bE(|S|)$ in the Chernoff bound (see $(2.6)$ on page $26$ of \cite{janson}) we obtain
	$$\bP\left(|S| < \frac{q^{n-1}-q^{n-2}}{2}\right) \leq \exp\left(-\frac{q^{n-3}}{4}\right).$$ In particular, with probability at least $1-\exp(-q^{n-3}/4)$, we have $|S|>\frac{q^{n-1}-q^{n-2}}{2}$. We may thus  assume $|S| > \frac{q^{n-1}-q^{n-2}}{2}$ in what follows. 

 We claim that every color class in a proper $q$-coloring of $G_{n,S}$ is an affine hyperplane of $\bF_q^n$. To see why, let $C_1,\ldots,C_q$ be independent sets in $G_{n,S}$ witnessing a proper $q$-coloring of $G_{n,S}$.  Fix $v \in S$ and consider the line $\ell_v := \{ \lambda v : \lambda \in \bF_q\}$ along with its translates $\ell_v + w:=\{\lambda v +w:\lambda\in\bF_q\}$, for $w\in \mathcal{H}_0$. Each set $\ell_v+w$ is a clique of size $q$ in $G_{n,S}$, and these cliques partition the vertex set of $G_{n,S}$, so in particular each $C_i$ contains at most one vertex from each of these translates $\ell_v+w$. Consequently,  $|C_i| \leq q^{n-1}$ for all $i\in\{1,\ldots, q\}$. By size considerations, it follows that $|C_i| = q^{n-1}$ for each $i\in\{1,\ldots,q\}$.

Consider a color class $C$.  Suppose $C$ determines at least $\frac{q+3}{2}q^{n-2}+q^{n-3}+\dots+q^2+q+1$ directions. Then if $\langle C \rangle$ denotes the set of all affine lines intersecting at least two points in $C$, we have $|\langle C\rangle|+|S|>1+q+\cdots+q^{n-1}$, so $\langle C \rangle \cap S \neq \emptyset$. However, this contradicts the assumption that $C$ is an independent set in $G_{n,S}$. Therefore $C$ determines at most $\frac{q+3}{2}q^{n-2}+q^{n-3}+\dots+q^2+q$ directions. Since $q$ is prime, by Corollary 10 in~\cite{strome-sziklai}, it follows that $C$ is an $\bF_q$-linear set. Hence, by Theorem~\ref{cone}, the color class $C$ is a cone with an $n-2$ (projective) dimensional vertex $\V$ at $H_\infty$ and an affine point $u_1$ as base. In particular, the affine plane corresponding to the $\bF_q$-subspace spanned by $\V$ passing through the affine point $u_1$  is contained in $C$. Since  $|C| = q^{n-1}$, it follows that $C$ is this affine hyperplane, and this proves the claim.

	To complete the proof, observe that for each $\lambda \in \bF_q^*\setminus\{1\}$, the map $\varphi_\lambda(x) = \lambda x $, $x \in \bF_q^n$ fixes each  color class. Moreover, $\varphi_\lambda$ fixes the set $S$ and $\varphi_\lambda(u) - \varphi_\lambda(v) = \varphi_\lambda(u-v)$, so $\varphi_\lambda$ is a non-trivial automorphism which fixes each color class. Therefore $\chi_D(G_{n,S})>q$.
\end{proof}

\begin{lem}\label{aut}
If $n\ge 5$ and $q\ge 5$ is prime, then $\mathrm{Aut}(G_{n,S}) \cong \bF_q^n \rtimes \bF_q^*$ with probability at least $1-2^{(-\frac{q^{n-1}}{3})}$. \end{lem}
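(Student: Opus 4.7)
The plan is to use Theorem~\ref{autcayley} to reduce the question to bounding the probability that some non-scalar element of $\mathrm{GL}_n(\bF_q)$ preserves the random set $\widetilde{S}$, and then to handle this via a union bound structured by the eigenspace geometry of the matrix.

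First, I would observe that $K=\bF_q^n\rtimes\bF_q^*$ is contained in $\Aut(G_{n,S})$, since $S$ is a union of punctured lines through $\mzero$. Thus Theorem~\ref{autcayley} applies, and the event $\Aut(G_{n,S})\ne K$ forces the existence of some $\varphi\in\Aut(G_{n,S})\setminus K$ normalizing the translation subgroup $G=\bF_q^n$. Since $q$ is prime, $\mathrm{Aut}(G)=\mathrm{GL}_n(\bF_q)$, and conjugation by $\varphi$ yields an element $\sigma\in\mathrm{GL}_n(\bF_q)$ with $\varphi=t_b\circ\sigma$ where $b=\varphi(\mzero)$. Composing with $t_{-b}\in G\le K$, we may assume $\varphi=\sigma$; the conditions $\varphi\notin K$ and $\varphi\in\Aut(G_{n,S})$ then become $\sigma\in\mathrm{GL}_n(\bF_q)\setminus\bF_q^*I$ and $\sigma(\widetilde{S})=\widetilde{S}$. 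It therefore suffices to show
\[
\bP\bigl(\exists\,\sigma\in\mathrm{GL}_n(\bF_q)\setminus\bF_q^*I\colon\sigma(\widetilde{S})=\widetilde{S}\bigr)\le 2^{-q^{n-1}/3}.
\]

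Next, I would fix such a $\sigma$ and derive a clean probability formula via orbit counting. Viewing $\widetilde{S}$ as a random subset of the set $\mathcal{L}$ of $q^{n-1}$ lines $\ell\subset\bF_q^n$ with $\ell\cap\Hy_0=\{\mzero\}$, and noting that $\sigma$ acts on $PG(n-1,q)$, the event $\sigma(\widetilde{S})=\widetilde{S}$ holds precisely when $\widetilde{S}$ is a union of $\langle\sigma\rangle$-orbits lying entirely in $\mathcal{L}$: any orbit meeting the projective hyperplane associated with $\Hy_0$ cannot intersect $\widetilde{S}\subseteq\mathcal{L}$. If $N(\sigma)$ denotes the number of such ``good'' orbits, then $\bP(\sigma(\widetilde{S})=\widetilde{S})=2^{N(\sigma)-q^{n-1}}$. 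Writing $F(\sigma)$ for the number of lines of $\mathcal{L}$ fixed by $\sigma$ and noting that every non-trivial good orbit contributes at least two lines, one gets $N(\sigma)\le(q^{n-1}+F(\sigma))/2$. For non-scalar $\sigma$ no eigenspace has dimension $n$, and optimizing $F(\sigma)\le\sum_i(q^{e_i}-1)/(q-1)$ over eigenspace-dimension vectors $(e_i)$ with $\max_i e_i\le n-1$ and $\sum_i e_i\le n$ yields $F(\sigma)\le(q^{n-1}-1)/(q-1)+1$.

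The hard part is the final union bound. A naive union bound over $|\mathrm{GL}_n(\bF_q)|\le q^{n^2}$ combined with the worst-case bound for $F(\sigma)$ yields a probability of order $q^{n^2}\cdot 2^{-q^{n-1}(q-2)/(2(q-1))}$, which falls short of the target $2^{-q^{n-1}/3}$ when $q,n$ are small (notably $q=n=5$). To overcome this, I would partition non-scalar $\sigma$'s by the invariant $d(\sigma):=\max_\lambda\dim\ker(\sigma-\lambda I)\in\{1,\ldots,n-1\}$. For $d(\sigma)=n-k$ with $k\ge 1$, the number of such $\sigma$ is at most $(q-1)\,q^{k(2n-k)}$ (since $\sigma-\lambda I$ has rank $k$ for some eigenvalue $\lambda$, and rank-$k$ matrices in $M_n(\bF_q)$ number at most $q^{k(2n-k)}$), while $F(\sigma)\le(q^{n-k}-1)/(q-1)+(q^k-1)/(q-1)$ drops rapidly in $k$. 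The binding case is $k=1$, contributing at most $O(q^{2n})\cdot 2^{-q^{n-1}(q-2)/(2(q-1))}$; a direct verification shows this is at most $2^{-q^{n-1}/3}/n$ for $q,n\ge 5$ (using $2n\log_2 q\le q^{n-1}/24$ in that range). The contributions for $k\ge 2$ decay geometrically because $F(\sigma)$ drops by roughly a factor of $q$ each time, easily beating the growth in the count; summing over $k$ yields the desired bound.
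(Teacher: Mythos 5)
Your proposal follows the same overall strategy as the paper -- reduce via Theorem~\ref{autcayley} to a non-scalar $\sigma\in\mathrm{GL}_n(\bF_q)$ stabilizing $\widetilde S$, compute $\bP(\sigma(\widetilde S)=\widetilde S)=2^{N(\sigma)-q^{n-1}}$ by orbit counting on the $q^{n-1}$ lines off $\Hy_0$, bound the orbit count by $(q^{n-1}+F(\sigma))/2$ via the fixed lines, control $F(\sigma)$ through eigenspaces, and finish with a union bound -- but it diverges at the last step in a way that is genuinely better. The paper uses a single worst-case bound $F(\sigma)\le q^{n-2}+1$ (slightly sharper than yours, since it discards eigenvector lines lying in $\Hy_0$) together with the crude count $|\mathrm{GL}_n(\bF_q)|<q^{n^2}$, and its final displayed inequality is asserted only ``for $q\ge 5$, $n\ge 6$''; as you correctly observe, that naive union bound really does fail at $q=n=5$ (one gets roughly $2^{58-249.5}=2^{-191.5}$, which is not below $2^{-625/3}=2^{-208.3}$), so the paper's proof as written does not cover the $n=5$ case claimed in the lemma statement. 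Your stratification by $d(\sigma)=\max_\lambda\dim\ker(\sigma-\lambda I)$, trading off the count $\approx q^{k(2n-k)}$ of matrices at distance $k$ from a scalar against the rapidly shrinking $F(\sigma)$, repairs this: the binding stratum $k=1$ contributes about $q^{2n}\cdot 2^{(157-625)/2}\approx 2^{-211}$ at $q=n=5$, which does clear the target $2^{-208.3}$ (barely -- the margin is under two bits with your eigenline bound, so in a full write-up you should either use the sharper $F(\sigma)\le q^{n-2}+1$ restricted to lines meeting $\Hy_0$ only in $\mzero$, which gives comfortable room, or verify the arithmetic explicitly), and the higher strata decay geometrically. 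Two small points to tidy: $d(\sigma)$ can be $0$ when $\sigma$ has no eigenvalues in $\bF_q$, a case your range $\{1,\ldots,n-1\}$ omits but which is trivial since then $F(\sigma)=0$; and the count of rank-$k$ matrices is $q^{k(2n-k)}$ only up to a small constant factor, which matters given how tight the $k=1$ case is.
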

%
\begin{proof}
	Since $G_{n,S}$ is a Cayley graph on the additive group $G=\bF_q^n$,  by Theorem~\ref{autcayley}, either $\mathrm{Aut}(G_{n,S})=K \cong \bF_q^n \rtimes \bF_q^*$ or there exists $\varphi \in \mathrm{Aut}(G_{n,S})\setminus K$ with $\varphi$ normalizing $G=\bF_q^n$.  We show that with probability at least $1-2^{(-\frac{q^{n-1}}{3})}$, there is no $\varphi$ satisfying the latter condition. 
%

 Suppose $\varphi\in\Aut(G_{n,S})$ normalizes $\bF_q^n$. If $a=\varphi(\mzero)$ and $\lambda_a:\bF_q^n\to \bF_q^n$ is the right translation via $a$, then $\lambda_{a}^{-1}\varphi$ is an automorphism of $G_{n,S}$ normalizing $\bF_q^n$ and with $(\lambda_a^{-1}\varphi)(\mzero)=(\lambda_a^{-1})(\varphi(\mzero))=(\lambda_a^{-1})(a)=a-a=\mzero$. Therefore, without loss of generality, we may assume that $\varphi(\mzero)=\mzero$. Since $S$ is the neighbourhood of $\mzero$ in $G_{n,S}$, we get $\varphi(S)=S$. Moreveor, since $\varphi$ acts as a group automorphism on $\bF_q^n$, we have $\varphi\in \mathrm{GL}_n(q)$.
 
Now, for $\varphi\in \mathrm{GL}_n(q)$, let $E_{\varphi}$ denote the event  $\varphi(S)=S$. Let $\mathcal{L}$ denote the set of all lines $\ell$ with $\ell\cap\Hy_0=\emptyset$. Also, let $\mathrm{Orb}_{\varphi}(\ell) = \{\ell, \varphi(\ell), \varphi^2(\ell), \dots, \varphi^k(\ell)\}$ where $\varphi^{k+1}(\ell)=\ell$. Then $$\bP(E_{\varphi})\le \prod_{i=1}^{N_\varphi}2^ {1-|\mathrm{Orb}_{\varphi}(\ell_i)|} = 2^ {N_\varphi-|\mathcal{L}|},$$ where
$N_{\varphi}$ denotes the number of distinct orbits of $\varphi$ in $\mathcal{L}$. Setting  $\G= GL(n,q)\setminus\{\lambda I:\lambda\in\bF_q^*\}$, we have 
\begin{equation}\label{prob} \bP\left(\bigcup_{\varphi\in \G}E_{\varphi}\right)\le \sum_{\varphi\in\G}\bP(E_{\varphi})\le 2^{-|\La|}\sum_{\varphi\in\G} 2^{N_{\varphi}}.
\end{equation}
Let  $F_\varphi:= |\{\ell\in\La: \varphi(\ell) =\ell\}|$ and $F:= \mathop{\max}\limits_{\varphi \in \mathcal{G}}{F_\varphi}$. Now $N_\varphi \le F + \frac{|\La| - F}{2} = \frac{F+|\La|}{2}$. Thus, it suffices to give a suitable upper bound for $F$. Towards that end, we note that, if  $F_{\varphi}=F$ for $\varphi\in\G$, then every line $\ell$ fixed by $\varphi$ corresponds to an eigenvector of $\varphi$. If $\E_1,\E_2\ldots,\E_k$ denote the eigenspaces of $\varphi$ for some distinct eigenvalues $\lambda_1,\ldots,\lambda_k$, then 
\begin{displaymath} F_{\varphi}\le \sum_{i=1}^k \left(\binom{\dim \E_i}{1}_q-\binom{\dim (\E_i\cap\Hy_0)}{1}_q\right)\le q^{n-2}+1.\end{displaymath}
Similarly, we have $|\La|=  \binom{n}{1}_q-\binom{n-1}{1}_q= q^{n-1}$, and so by~\eqref{prob}, we have
$$ \bP\left(\bigcup_{\varphi\in \G}E_{\varphi}\right)\le |\G|2^{\frac{F-|\La|}{2}}<q^{n^2}2^{-(\frac{q^{n-1}-q^{n-2}-1}{2})}<2^{-(\frac{q^{n-1}}{3})},$$
 for $q \ge 5$, $n\ge 6$.
\end{proof}

Computations and estimates similar to the ones presented in the proof of Lemma~\ref{aut} have been proved useful in a variety of problems, see for instance{~\cite{BP}},{~\cite{GuSp} and~\cite[Section~$6.4$]{PSV1}.

\begin{proof}[Proof of Theorem~$\ref{qn}$]	
Given $k \in \bN$ with $k\ge 4$, pick a prime number $q$ with $k<q<2k$. Consider the random graph $G_{n,S}$ of the group $\bF_q^n$ as constructed above. By Lemmas~\ref{hsp} and~\ref{aut}, with positive probability, the graph $G_{n,S}$ satisfies the statements of both lemmas, and hence satisfies the conclusions of Theorem~\ref{qn}.\end{proof}

\section{Concluding Remarks}\label{sec4}
\begin{itemize}
\item  We observe that, for $S$ chosen randomly as in the proof of our result, the distinguishing chromatic number of $G_{n,S}$  is $q+1$ with high probability. Indeed, consider the $q$-coloring $C$ described in Lemma~\ref{coloring}. Re-color the vertex $\mzero$ using an additional color. Then the coloring described by the partition $C' = C \cup \{\mzero\}$ is a proper, distinguishing coloring of $G_{n,S}$ with $q+1$ colors. In fact, $C'$ is clearly proper, and to show that it is distinguishing, consider $\varphi \in \Aut(G_{n,S})=\bF_q^n \rtimes \bF_q^*$ (by Lemma \ref{aut}) that fixes every color class.  
 Write $\varphi(x) = \lambda x + b$ with $\lambda \in \bF_q^*, b \in \bF_q^n$. Since $\varphi$ fixes the color class containing $\mzero$, we have $b = \mzero$. Also, $x$ and $\lambda x$ cannot be in same color class unless $\lambda =1$. Therefore $\varphi$ is the identity automorphism.    \\
 It is  interesting to determine if one can obtain families of vertex-transitive graphs with $\chi_D(G)>\chi(G)+1$, with `small' automorphism groups and with $\chi(G)$ being arbitrarily large. In fact, for $k\in\bN$, there is no known family of vertex-transitive graphs for which $\chi_D(G)>\chi(G)+1>k$ and $|\Aut(G)|=O(|V(G)|^{O(1)})$. It is plausible that Cayley graphs over certain groups may provide the correct constructions.
 \item Theorem~\ref{qn} establishes,  for any fixed $k$, the existence of vertex-transitive graphs $G_n=(V_n,E_n)$ with $\chi_D(G_n)>\chi(G_n)>k$ and with $|\Aut(G_n)|<2k|V_n|$. It would be interesting to obtain a similar family of graphs that satisfy  with $\chi_D(G_n)>\chi(G_n)>k$ and with $|\Aut(G_n)|\le C|V_n|$, for some absolute constant $C$.

\end{itemize}

\textbf{Acknowledgments}

The first and second authors would like to thank Ted Dobson for useful discussions.


\bibliographystyle{abbrv}
\bibliography{refer}

\end{document}